\newcommand{\ju}{\wedge}
\newcommand{\Ju}{\bigwedge}
\newcommand{\la}{\lambda}
\newcommand{\te}{\theta}
\newcommand{\s}{\mathcal{S}}
\newcommand{\cc}{\mathcal{C}}
\newcommand{\jj}{\mathcal{J}}
\newcommand{\lcm}{\mathrm{lcm}}
\newcommand{\vv}{\mathbf{v}}
\newcommand{\uu}{\mathbf{u}}
\newcommand{\dd}{\mathfrak{D}}
\newtheorem{theorem}{Theorem}[section]   
\newtheorem{corollary}[theorem]{Corollary}     
\newtheorem{lemma}[theorem]{Lemma}         
\newtheorem{proposition}[theorem]{Proposition}  
\theoremstyle{definition}
\newtheorem{definition}[theorem]{Definition}   
\theoremstyle{remark}
\newtheorem{remark}[theorem]{Remark}        
\newtheorem{example}[theorem]{\emph{Example}}        
\numberwithin{equation}{section}     
\def\ds {\displaystyle}
\begin{document}

\title[Semilattice Networks]
{The Dynamics of Semilattice Networks}

\author{Alan~Veliz-Cuba and Reinhard Laubenbacher}

\maketitle

\begin{abstract}

Time-discrete dynamical systems on a finite state space have been
used with great success to model natural and engineered systems such
as biological networks, social networks, and engineered control
systems. They have the advantage of being intuitive and models can
be easily simulated on a computer in most cases; however, few
analytical tools beyond simulation are available. The motivation for
this paper is to develop such tools for the analysis of models in
biology. In this paper we have identified a broad class of discrete
dynamical systems with a finite phase space for which one can derive
strong results about their long-term dynamics in terms of properties
of their dependency graphs. We classify completely the limit cycles
of semilattice networks with strongly connected dependency graph and
provide polynomial upper and lower bounds in the general case.

\end{abstract}

\section{Introduction and Background}
Time-discrete dynamical systems on a finite state space play an important
role in several different contexts. Examples of such systems include
Boolean networks, cellular automata, agent-based models, and
finite state machines, to name a few. This modeling paradigm
has been used with great success to model natural and engineered systems
such as biological networks, social networks, and engineered control systems.
It has the advantage of being intuitive and models can be easily simulated on
a computer in most cases. A disadvantage of discrete models of this type
is that few analytical tools beyond simulation are available. The motivation
for this paper is to develop such tools for the analysis of models in
biology, but the results are of independent
mathematical interest. Some theoretical
results have been proven for Boolean networks and are reviewed in
\cite{CBN}. In that paper the authors study conjunctive Boolean networks,
that is, Boolean networks for which the future state of each node is
computed using the Boolean AND operator. It is shown that the dynamics of
such networks is controlled strongly by the topology of the network. The
current paper shows that the results in \cite{CBN} are valid much more
broadly. We briefly describe the main results in \cite{CBN}.

A Boolean network $f$ on $n$ nodes $x_1,\ldots , x_n$ can be viewed as a
time discrete dynamical system over the Boolean field $\mathbf F_2$:
$$
f = (f_1,\ldots, f_n): \mathbf F_2^n\longrightarrow \mathbf F_2^n,
$$
where the \emph{coordinate functions} $f_i$ are the Boolean functions
assigned to the nodes of the network.
We can associate two directed graphs to $f$. The
\emph{dependency graph} (or wiring diagram) $\dd(f)$ of $f$ has $n$
vertices corresponding to the variables $x_1,\ldots ,x_n$ of $f$.
There is a directed edge $i \rightarrow j$ if the function $f_j$
depends on $x_i$ (i.e. there is an instance where changing the value
of $x_i$ changes the value of $f_j$). That is, $\dd(f)$ encodes the
variable dependencies in $f$.
The dynamics of $f$ is encoded by its \emph{phase space}, denoted by
$\s(f)$. It is the directed graph with vertex set $\mathbf F_2^n$ and a
directed edge from $\uu$ to ${\vv}$ if $f(\uu) = \vv$.
Thus, the graph $\dd(f)$ encodes part of the structure of $f$ and the
graph $\s(f)$ encodes its dynamics. The results in \cite{CBN} relate
these two graphs, deriving information about $\s(f)$ from $\dd(f)$, in
the case where each $f_i$ is a conjunction of the variables $x_j$ for
which there is an edge $j \rightarrow i$ in $\dd(f)$. These networks
were called \emph{conjunctive} networks in \cite{CBN}.

It is easy to see that the graph $\s(f)$ has the following structure.
Its connected components consist of a unique directed cycle, a \emph{limit cycle},
with directed
``trees"  feeding into the nodes of the cycle, representing the transient
states of the network, each of which eventually maps to a periodic point.
If the graph $\dd(f)$ is strongly connected, that is, there is a directed path
from any vertex to any other vertex, then it is shown in \cite{CBN} that there is a precise closed formula
for the number of limit cycles of any given length. This formula depends
on a numerical invariant of $\dd(f)$, its loop number. If $\s(f)$ is not strongly connected, then
there is a sharp lower bound and an upper bound on the number of limit cycles,
in terms of the loop numbers of the strongly connected components of $\s(f)$ and
the antichains in the poset of strongly connected components. These two bounds agree
for the number of fixed points of $f$, so that one corollary is a closed formula for the number
of fixed points of a general conjunctive network.

The current paper shows that these results hold in much broader generality, namely for time discrete
dynamical systems over any finite set $X$:
$$
f = (f_1,\ldots , f_n): X^n\longrightarrow X^n,
$$
such that the $f_i$ are constructed from an operator
$\ju:X^2\rightarrow X$, which has the property that it endows $X$
with the structure of a semilattice, that is, $\ju$ is commutative,
associative, and idempotent. We will call such functions $f_i$
\emph{semilattice functions}. That is, any semilattice $X$ gives
rise to a dynamical system to which the formulas and bounds for
limit cycles hold. It is shown furthermore that they hold precisely
for the class of semilattice networks.  It is important to mention
that since semilattice networks are not linear (see Example
\ref{eg:linear}), our results complement those for linear systems in
\cite{El,linear}.

\section{Semilattice networks}

Let $X$ be a set with $M$ elements and consider a dynamical system
with $n$ variables over $X$:
$$
 f = (f_1, \ldots , f_n): X^n\longrightarrow X^n.
$$

\begin{definition}\label{def:junc}
A function $\ju:X^2\rightarrow X$ is called a \emph{semilattice operator} if it
satisfies the following (with notation $x\ju y:=\ju(x,y)$):
\begin{eqnarray*}
  x\ju y &=& y\ju x  \textrm{ , $\ju$ is commutative} \\
  x\ju (y\ju z) &=& (x\ju y)\ju z \textrm{ , $\ju$ is associative}\\
  x\ju x &=& x \textrm{ , $\ju$ is idempotent}
\end{eqnarray*}
\end{definition}

Note that semilattice operators induce the structure of a semilattice on $X$. Conversely,
the meet or join operations on a semilattice are semilattice operators in the above sense.

\begin{example}
Some examples of semilattice operators are:
\begin{eqnarray*}
  \ju & = & AND  \textrm{ over } [0,1]^2 \\
  \ju & = & OR \textrm{ over } [0,1]^2 \\
  \ju & = & MIN \textrm{ over } [0,m]^2 \\
  \ju & = & MAX \textrm{ over } [0,m]^2 
\end{eqnarray*}
That is, semilattice operators are a generalization of the conjunctive and
disjunctive operators used in \cite{CBN}. \label{example:junctive}
\end{example}

The domain of a semilattice operator $\ju:X^2\longrightarrow X$ can
be extended to $\ju:X^k\longrightarrow X$ with $k\geq 1$ by
\begin{eqnarray*}
  x_1\ju x_2\ju \ldots \ju x_k &=& x_1\ju(x_2\ju(\ldots \ju(x_{k-1}\ju x_k))) \textrm{ if $k>1$ and} \\
  \ju x=x\ju x =x \\
\end{eqnarray*}
We will call such an extended operator a \emph{semilattice function}.

\begin{definition}
A dynamical system $f = (f_1, \ldots , f_n): X^n \longrightarrow
X^n$ is called a \emph{semilattice network} if there exists a semilattice operator,
$\ju$, such that $f_j=\Ju |_{X^k}$ is a semilattice function for all $j=1,\ldots,n$ (where
$f_j$ depends only on $x_{i_1},\ldots,x_{i_k}$).
\end{definition}

Let $f: X^n \longrightarrow X^n$ be a semilattice network, $G =
\dd(f)$, and $A$ the adjacency matrix of $G$. We will assume here and
in the remainder of the paper that none of the coordinate functions
of $f$ are constant, that is, all vertices of $G$ have positive
in-degree.

\subsection{Structure of the Dependency Graph} \label{adj-mat-sec}
Define the following
relation on the vertices of $G$: $a \sim b$ if and only if there is
a directed path from $a$ to $b$ and a directed path from $b$ to $a$.
It is easy to check that $\sim$ is an equivalence relation. Suppose
there are $t$ equivalence classes $V_1,\dots, V_t$. For each
equivalence class $V_i$, the subgraph $G_i$ with the vertex set
$V_i$ is called a \emph{strongly connected component} of $G$.
The graph $G$ is called \emph{strongly connected} if $G$ consists of a
unique strongly connected component. A trivial strongly connected
component is a graph on one vertex and no self loop. Since such
components do not influence the cycle structure of the network,
we assume that all strongly connected components are non-trivial.

\begin{example}\label{eg:junctivenetwork}
Our running example for this paper will be the semilattice network
$f:\{0,1,2\}^{63}\rightarrow \{0,1,2\}^{63}$ with dependency graph
given by Figure \ref{fig:example} and semilattice function
$\wedge=MIN$.
\end{example}

\begin{figure}[here]
\centerline{\framebox{\includegraphics[totalheight=8cm]{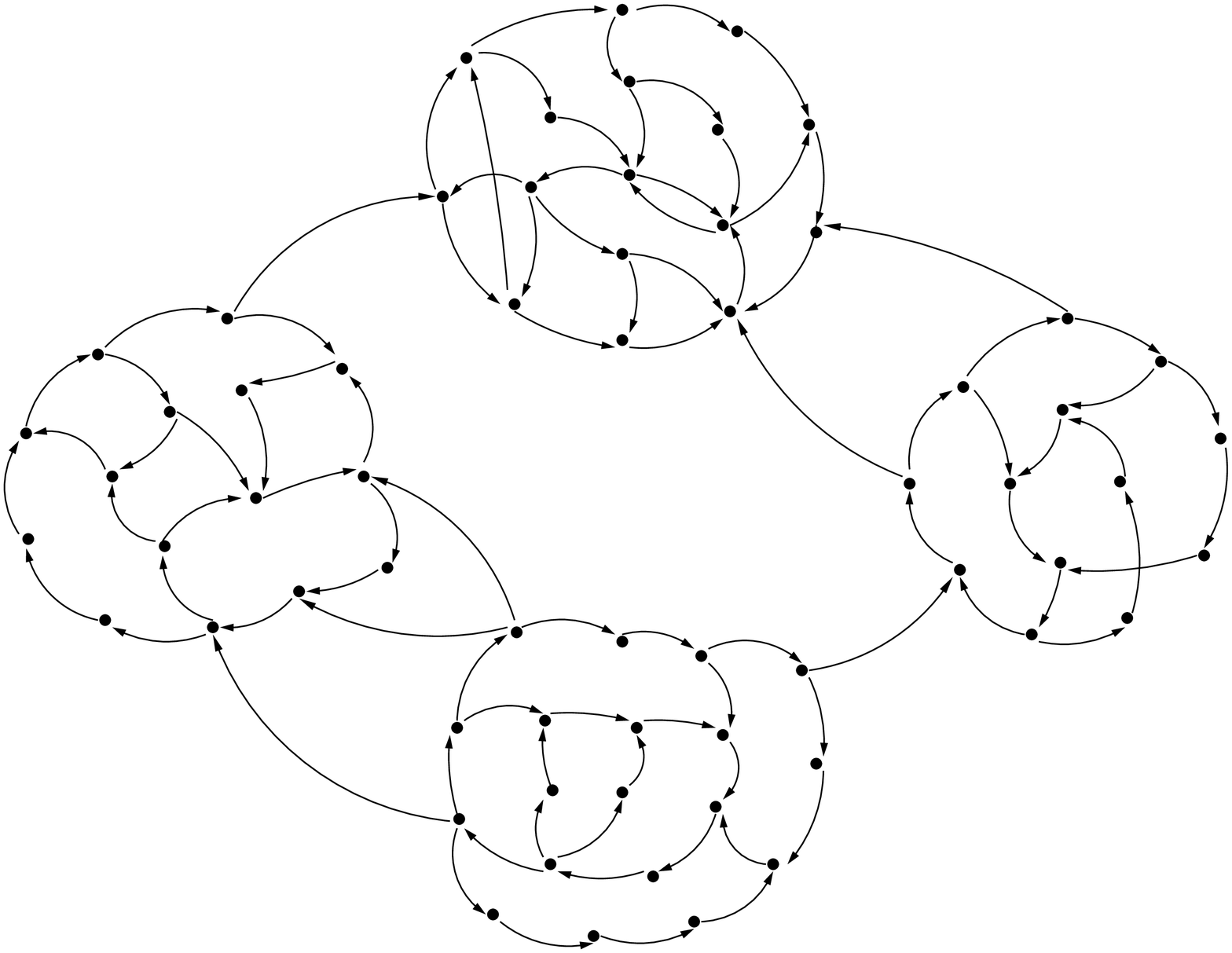}}}
\caption{Dependency graph of the semilattice network in Example
\ref{eg:junctivenetwork}. The labels have been omitted for
simplicity.} \label{fig:example}
\end{figure}

\vspace{.2cm} \noindent Let $G_i$ be a strongly connected component,
and let $h_i$ be the semilattice network with dependency graph
$\dd(h_i) = G_i$. Let $h: X^n \longrightarrow X^n$ be the semilattice
network defined by $h = (h_1,\dots,h_t)$. That is, the
dependency graph of $h$ is the disjoint union of the strongly
connected graphs $G_1,\ldots ,G_t$, and $h$ is obtained from
$g$ by deleting all edges between strongly connected components.

\noindent
Now define the following order relation on the strongly connected
components $G_1,\dots,G_t$ of the dependency graph $\dd(f)$ of the network $f$.
\begin{equation}
G_i \preceq G_j \mbox{ if there is at least one edge from a vertex in } G_i \mbox{ to a vertex in } G_j.
\end{equation}
In this way we obtain a partially ordered set $\mathcal{P}$. In this paper,
we relate the dynamics of $f$ to the dynamics of its strongly
connected components and the poset $\mathcal{P}$.

Example \ref{fig:example} (Cont.). The dependency graph of $f$ has
four strongly connected components, $G_1$, $G_2$, $G_3$, and $G_4$
(bottom, left, right and top, resp.). The poset is given by
$G_1\preceq G_2,G_3 \preceq G_4$.

\begin{figure}[here]
\centerline{\framebox{
\includegraphics[totalheight=3cm]{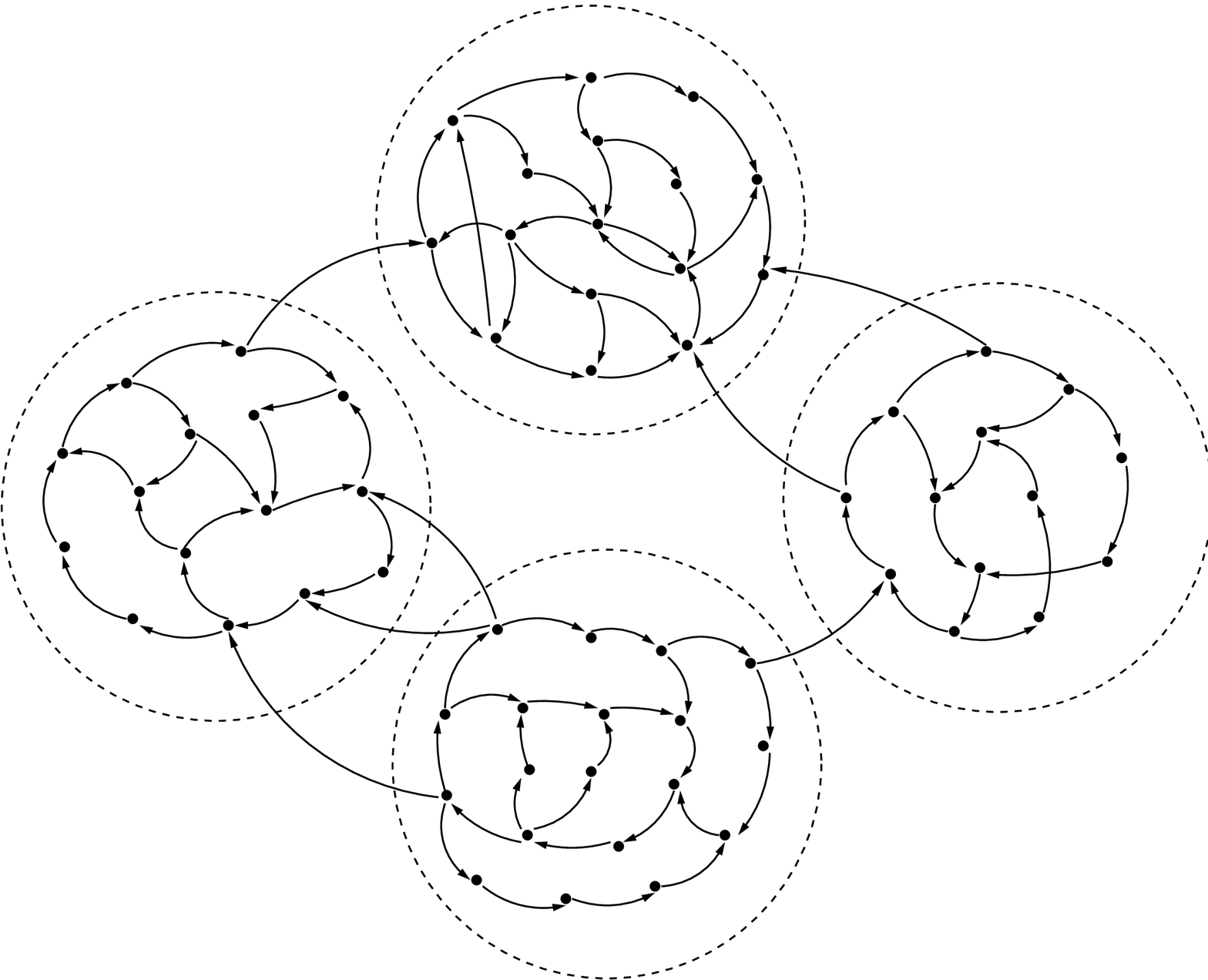}
\includegraphics[totalheight=3cm]{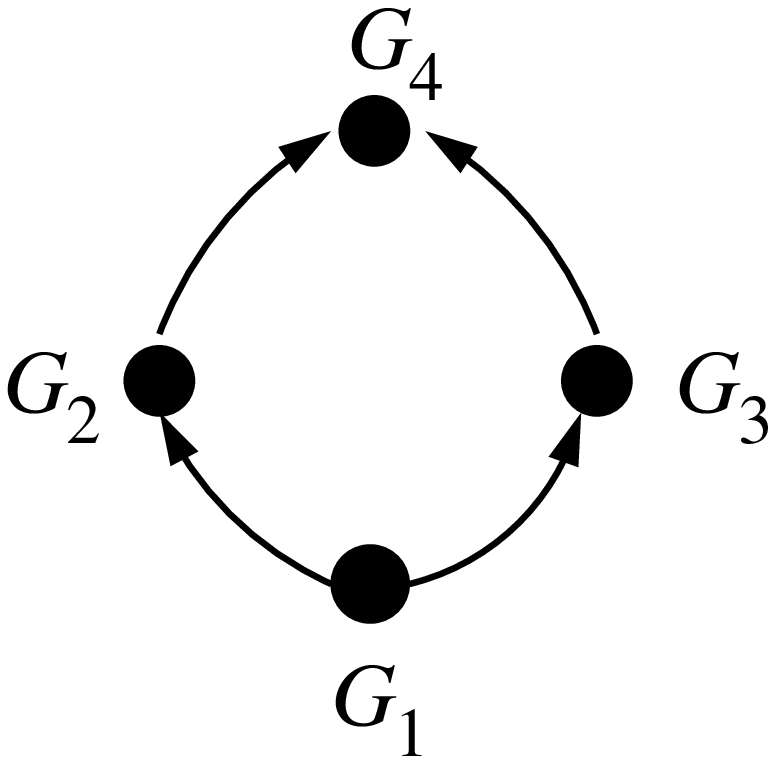}
}} \caption{The strongly connected components of $f$ (left) and
their poset (right).} \label{fig:examplecdg}
\end{figure}

\subsection{The Loop Number}

\begin{definition}
The \emph{loop number} of a strongly connected graph is the greatest
common divisor of the lengths of its simple (no repeated vertices)
directed cycles. The loop number of any directed graph $G$ is the
least common multiple of the loop numbers of its non-trivial
strongly connected components.
\end{definition}

Example \ref{fig:example} (Cont.). In our running example, the loop
numbers are $loop(G_1)=1$, $loop(G_2)=2$, $loop(G_3)=3$ and
$loop(G_4)=1$.

\section{Semilattice Networks with Strongly Connected Dependency Graph}
\label{stronglyconnected}

In this section we give an exact formula for the cycle structure of
semilattice networks with strongly connected dependency graphs. In the
next section we will also consider networks with general dependency
graphs and give upper and lower bounds for the cycle structure.

The formula for conjunctive networks was given and proven in
\cite{CBN}. It is not difficult to show that the proofs are also valid
for general semilattice networks.

\vspace{.2cm}\noindent Let $f:X^n \longrightarrow X^n$ be a semilattice
network with semilattice operator $\ju$. Assume that the dependency
graph $\dd(f)$ of $f$ is strongly connected with loop number $c$.

\begin{lemma}\label{lem:partition}
The set of vertices of $\dd(f)$ can be partitioned into $c$
non-empty sets $W_1,\dots,W_c$ such that each edge of $\dd(f)$ is an
edge from a vertex in $W_i$ to a vertex in $W_{i+1}$ for some $i$
with $1 \leq i \leq c$ and $W_{c+1} = W_1$.
\end{lemma}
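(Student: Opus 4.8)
The plan is to realize the partition $W_1,\dots,W_c$ as the level sets of a $\zz/c\zz$-valued ``height'' function on the vertices of $\dd(f)$, where $c=\mathrm{loop}(\dd(f))$ is the gcd of the lengths of the simple directed cycles.

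First I would fix a base vertex $v_0$ and, for every vertex $v$, consider the lengths of directed walks from $v_0$ to $v$; by strong connectivity at least one such walk exists. I would define $d(v)\in\zz/c\zz$ to be the length of any such walk, reduced modulo $c$. The key preliminary step is to check that $d(v)$ is well-defined, i.e. independent of the chosen walk. For this I would use the elementary fact that every closed directed walk decomposes into simple directed cycles, so that its length is a sum of lengths of simple cycles and is therefore divisible by $c$. Given two walks $P,Q$ from $v_0$ to $v$ and any return walk $R$ from $v$ to $v_0$ (again available by strong connectivity), both $P\cdot R$ and $Q\cdot R$ are closed walks at $v_0$, so their lengths are $\equiv 0\pmod c$; subtracting yields $|P|\equiv|Q|\pmod c$, which proves well-definedness.

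Next I would set $W_i=\{v : d(v)\equiv i \pmod c\}$ for $i=1,\dots,c$, with the cyclic labelling that makes $W_{c+1}=W_1$. The edge condition is then immediate: if $u\to v$ is an edge, appending it to a walk from $v_0$ to $u$ produces a walk to $v$ that is one step longer, so $d(v)=d(u)+1$; hence any edge leaving $W_i$ lands in $W_{i+1}$. The remaining point is non-emptiness of each $W_i$, which is where the only real content lies. I would show the residues $d(v)$ exhaust $\zz/c\zz$ by inspecting a single simple cycle $v_1\to\cdots\to v_\ell\to v_1$: along it the values take the $\ell$ consecutive residues $d(v_1),d(v_1)+1,\dots,d(v_1)+\ell-1$. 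Since $c\mid\ell$ and $\ell\ge 1$ force $\ell\ge c$, these $\ell$ consecutive residues already cover all of $\zz/c\zz$, so every $W_i$ is non-empty; such a cycle exists because $\dd(f)$ is strongly connected and non-trivial.

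Assembling well-definedness, the edge property, and non-emptiness gives the stated partition. The main obstacle is the well-definedness argument, which rests on the decomposition of closed walks into simple cycles together with the defining gcd property of the loop number; once that is secured, the edge and non-emptiness claims are routine bookkeeping.
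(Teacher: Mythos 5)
Your proof is correct and complete: the well-definedness of the walk-length residue $d(v)$ via decomposition of closed walks into simple cycles, the edge property $d(v)=d(u)+1$, and the non-emptiness via a simple cycle of length $\ell\geq c$ are all sound. The paper itself gives no in-house proof, deferring to \cite[Lemma 3.4.1(iii)]{CombMath} and \cite[Lemma 4.7]{CLP}, and the classical argument in those references is essentially the one you give (the imprimitivity partition defined by path lengths modulo $c$ from a fixed base vertex), so your proposal matches the intended proof while having the merit of being self-contained.
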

\begin{proof}
For a proof of this fact see \cite[Lemma 3.4.1(iii)]{CombMath}
or \cite[Lemma 4.7]{CLP}.
\end{proof}
Let $s_i$ be the number of elements of $W_i$. Without loss of
generality we assume for the rest of the section that
$W_1=\{1,\ldots,s_1\}$, $W_2=\{s_1+1,\ldots,s_1+s_2\}$,\ldots,
$W_c=\{s_1+\ldots+s_{n-1}+1,\ldots,s_1+\ldots+s_c =n\}$,

\begin{proposition}
Let $c$ be the loop number of $f$; then there exists $k_0$ such
that for all $k\geq k_0$ we have
\begin{eqnarray*}
  f^{ck}(x)&=&(\underbrace{y_1,\dots,y_1}_{s_1 \
times}, \ldots,\underbrace{y_c,\dots,y_c}_{s_c \ times}) \\
  f^{ck+1}(x)&=&(\underbrace{y_c,\dots,y_c}_{s_1 \
times}, \underbrace{y_1,\dots,y_1}_{s_2 \ times},\dots,
\underbrace{y_{c-1},\dots,y_{c-1}}_{s_c \ times}) \\
&\vdots\\
  f^{ck+j}(x)&=&(\underbrace{y_{c+1-j},\dots,y_{c+1-j}}_{s_1 \
times}, \ldots,\underbrace{y_c,\dots,y_c}_{s_j \ times},
\underbrace{y_1,\dots,y_1}_{s_{j+1} \ times},\dots,
\underbrace{y_{c-j},\dots,y_{c-j}}_{s_c \ times})\\
&\vdots\\
  f^{ck+c}(x)&=&f^{ck}(x)
\end{eqnarray*}
where $y_j=\Ju_{i\in W_j}x_i$ \label{prop:iterations}
\end{proposition}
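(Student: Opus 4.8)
The plan is to separate two phenomena: (i) on states that are already block-constant, $f$ acts as a simple cyclic shift of the block values, and (ii) every trajectory reaches such a block-constant state after finitely many steps. Combining the two yields all the displayed identities simultaneously, with $y_j=\Ju_{i\in W_j}x_i$ emerging as the stabilized value on block $W_j$.

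First I would establish (i). Call a state block-constant if $x_\ell=z_i$ for all $\ell\in W_i$ and each $i$. By Lemma~\ref{lem:partition} every in-neighbor of a node $j\in W_{i+1}$ lies in $W_i$, and since all those in-neighbors carry the common value $z_i$, idempotency of $\ju$ gives $f_j(x)=\Ju z_i=z_i$, the meet being over a nonempty set because in-degrees are positive by assumption. Hence $f$ sends the block-value vector $(z_1,\dots,z_c)$ to $(z_c,z_1,\dots,z_{c-1})$, a cyclic shift, and applying $f$ a total of $c$ times restores the original. This alone yields $f^{ck+c}=f^{ck}$ on block-constant states and, once the value on $W_i$ at time $ck$ is identified as $y_i$, produces exactly the shifted forms listed for each $f^{ck+j}$.

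Next I would prove (ii) by analyzing $f^c$. Unwinding the composition via associativity, commutativity and idempotency shows that for $j\in W_i$ one has $f^c_j(x)=\Ju\{x_\ell:\text{there is a walk of length }c\text{ from }\ell\text{ to }j\}$, and the partition property forces every such $\ell$ to lie in $W_i$. Thus $f^c$ factors as a product $g_1\times\cdots\times g_c$ of independent semilattice networks, where $g_i$ acts on the coordinates $W_i$ with dependency graph $H_i$ recording length-$c$ walks inside $W_i$. The crucial point is that $H_i$ is strongly connected and has loop number $1$: strong connectivity holds because any walk in $G$ between two vertices of $W_i$ has length a multiple of $c$, and aperiodicity follows from the standard fact that the gcd of the lengths of closed walks through a fixed vertex of a strongly connected $G$ equals its loop number $c$, so the corresponding closed-walk lengths in $H_i$ (these lengths divided by $c$) have gcd $1$. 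I then invoke the standard fact that in a strongly connected aperiodic digraph there is an $N_0$ admitting walks of every length $N\geq N_0$ between any ordered pair of vertices; applied to $H_i$ this gives $g_i^k(x)_j=\Ju_{\ell\in W_i}x_\ell=y_i$ for every $j\in W_i$ and all large $k$. Choosing $k_0$ large enough to work for all $i$ gives $f^{ck}(x)$ block-constant with value $y_i$ on $W_i$, and feeding this into (i) completes every line.

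The main obstacle is the graph theory underlying (ii): checking that the cyclic-class decomposition makes $f^c$ block-diagonal with each block $H_i$ strongly connected and aperiodic, which is precisely what forces the meet over an entire block to materialize. The shift computation in (i) is routine once idempotency and the partition are in hand, and the finiteness of $X^n$ guarantees that the graph-theoretic $k_0$ is also a uniform bound for the dynamics.
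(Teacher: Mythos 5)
Your proof is correct and follows essentially the same route as the paper, whose ``proof'' is simply a pointer to the analogous result \cite[Theorem 4.10]{CLP}: unwind iterates into meets over walk-origins (using all three semilattice axioms), use the cyclic-class partition of Lemma \ref{lem:partition} to make $f^c$ block-diagonal, and invoke aperiodicity of each block to force the meet over a whole class $W_i$ for large $k$, after which the cyclic-shift computation gives every displayed line. The only difference is that you supply in full the graph-theoretic details (strong connectivity and loop number $1$ of each $H_i$, the uniform walk-length bound $N_0$) that the paper delegates to the citation.
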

\begin{proof}
The proof is analogous to \cite[Theorem 4.10]{CLP}.
\end{proof}

The following corollary states that the period of $f$ can be
obtained from the topology of its dependency graph.
\begin{corollary}\label{cor:period}
The period of $f$ is equal to the loop number of $\dd(f)$.
\end{corollary}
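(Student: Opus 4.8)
The plan is to read the period of $f$ directly off the iteration formula established in Proposition \ref{prop:iterations}. Recall that the \emph{period} of $f$ is the smallest positive integer $p$ such that $f^{k+p}(x) = f^k(x)$ for all $x$ once $k$ is large enough (i.e.\ on the periodic points of $f$). So I must show two things: first, that $f^{ck+c} = f^{ck}$ on eventually periodic states, which gives $\mathrm{period}(f) \mid c$; and second, that no proper divisor of $c$ works, which gives $c \mid \mathrm{period}(f)$, and hence equality.

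First I would dispatch the upper bound. The last line of Proposition \ref{prop:iterations} states exactly $f^{ck+c}(x) = f^{ck}(x)$ for all $k \geq k_0$, so on any periodic point the forward orbit returns after $c$ steps. This shows the period divides $c = loop(\dd(f))$.

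For the lower bound I would show that $c$ is the \emph{smallest} return time, by examining the explicit form of the iterates and exhibiting a state whose orbit genuinely has length $c$. The key observation from the proposition is that $f^{ck+j}(x)$ is the vector in which the block $W_1$ carries the value $y_{c+1-j}$ (indices mod $c$); thus as $j$ runs from $0$ to $c-1$, the value sitting in block $W_1$ cycles through $y_c, y_{c-1}, \dots, y_1$. Two iterates $f^{ck+j}$ and $f^{ck+j'}$ therefore agree on block $W_1$ only when $y_{c+1-j} = y_{c+1-j'}$ as functions of $x$. So it suffices to choose an input $x$ for which the quantities $y_1 = \Ju_{i\in W_1}x_i, \dots, y_c = \Ju_{i \in W_c}x_i$ are pairwise distinct; then the states $f^{ck}(x), f^{ck+1}(x), \dots, f^{ck+c-1}(x)$ are all distinct, forcing the orbit length, and hence the period, to be at least $c$. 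Since $X$ is a semilattice and each $W_j$ is nonempty (Lemma \ref{lem:partition}), such an $x$ can be produced whenever $X$ has at least $c$ distinct elements reachable as joins; I would argue this is always arrangeable, or alternatively reduce to the quotient semilattice generated by the relevant values.

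The main obstacle I anticipate is precisely this lower-bound step: guaranteeing the existence of an input realizing $c$ distinct block-values $y_1,\dots,y_c$, since a priori the semilattice $X$ might be small or the joins might collapse. If $X$ has fewer than $c$ elements this naive choice fails, and one must instead argue that the period is still $c$ by a finer divisor argument — showing that if the common return time were a proper divisor $d \mid c$, then the partition $W_1,\dots,W_c$ would refine to one with $d$ blocks, contradicting the maximality built into the loop number via Lemma \ref{lem:partition}. I would therefore hedge by proving the lower bound through this combinatorial route, tying any hypothetical shorter period $d$ back to a coarser cyclic partition of $\dd(f)$ and invoking that $c$ is the greatest common divisor of cycle lengths, so no such $d < c$ can be consistent with the edge structure.
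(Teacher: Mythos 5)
Your upper bound is fine and matches the paper's (implicit) derivation: Proposition \ref{prop:iterations} gives $f^{ck+c}(x)=f^{ck}(x)$ for all $k\geq k_0$, so the period divides $c$. The genuine gap is in your lower bound. You do not need the block values $y_1,\dots,y_c$ to be pairwise distinct — insisting on that manufactures a spurious obstruction when $M<c$ — and the fallback you hedge with is not a valid argument. Lemma \ref{lem:partition} carries no maximality clause that a shorter dynamical period could contradict: the partition $W_1,\dots,W_c$ is a purely combinatorial fact about $\dd(f)$, whereas the period of $f$ depends also on the semilattice $X$. If the period were a proper divisor $d$ of $c$, nothing would force a cyclic partition of the graph into $d$ blocks; the collapse can happen entirely on the dynamics side (and really does in the degenerate case $M=1$, where the period is $1$ no matter what $c$ is — which shows the corollary tacitly assumes $M\geq 2$, and that no graph-theoretic argument alone can close the gap).

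The fix is simpler than your first attempt. By Proposition \ref{prop:iterations}, on the eventual image the dynamics is the cyclic rotation of the tuple $(y_1,\dots,y_c)$, so it suffices to realize a tuple whose \emph{primitive period under rotation} is $c$, not one with $c$ distinct entries. Pick $a\neq b$ in $X$ and set $x_i=a$ for $i\in W_1$ and $x_i=b$ otherwise; idempotence of $\ju$ gives $y_1=a$ and $y_j=b$ for $j\geq 2$, and the tuple $(a,b,\dots,b)$ has primitive rotation period exactly $c$. Hence the $c$ states $f^{ck_0}(x),f^{ck_0+1}(x),\dots,f^{ck_0+c-1}(x)$ are pairwise distinct and form a cycle of length $c$, which together with your upper bound yields the equality. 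This only needs $M\geq 2$, and it is in substance how the paper intends the corollary to follow from Proposition \ref{prop:iterations} — equivalently, from Corollary \ref{cor:rotation}, since the rotation $R$ on $X^c$ with $|X|\geq 2$ visibly has period $c$.
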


The following corollary states that the long-term dynamics of a
semilattice network can be reduced to the dynamics of a rotation over
$X^c$, with $c$ as in Lemma \ref{lem:partition}.

\begin{corollary}\label{cor:rotation}
The cycle structure of $f$ is equal to the cycle structure of
$R:X^c\rightarrow X^c$ where
$R(y_1,\ldots,y_c)=(y_c,y_1,\ldots,y_{c-1})$.
\end{corollary}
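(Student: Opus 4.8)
The plan is to exhibit an explicit conjugacy between the rotation $R$ and the restriction of $f$ to its set of periodic points. Write $B\subseteq X^n$ for the set of \emph{block-constant} states, i.e.\ those that are constant on each of the blocks $W_1,\dots,W_c$ of Lemma~\ref{lem:partition}, and define $\phi\colon X^c\to X^n$ by sending $(y_1,\dots,y_c)$ to the block-constant state whose value on $W_j$ is $y_j$; explicitly $\phi(y_1,\dots,y_c)=(\underbrace{y_1,\dots,y_1}_{s_1},\dots,\underbrace{y_c,\dots,y_c}_{s_c})$. This map is clearly injective with image exactly $B$, so it identifies $X^c$ with $B$. The whole argument then reduces to showing that $B$ is precisely the set of periodic points of $f$ and that $\phi$ transports $R$ to $f|_B$.

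The first key step is to verify the intertwining identity $f\circ\phi=\phi\circ R$. This is where the combinatorics of Lemma~\ref{lem:partition} enters: every edge into a vertex of $W_j$ comes from a vertex of $W_{j-1}$ (indices read cyclically modulo $c$), so on a block-constant input $\phi(y_1,\dots,y_c)$ each coordinate function attached to a vertex of $W_j$ is the semilattice function applied to finitely many copies of $y_{j-1}$, which collapses to $y_{j-1}$ by idempotence of $\ju$. Hence $f$ sends $\phi(y_1,\dots,y_c)$ to the block-constant state with value $y_{j-1}$ on $W_j$, namely $\phi(y_c,y_1,\dots,y_{c-1})=\phi(R(y_1,\dots,y_c))$; this matches the first displayed line of Proposition~\ref{prop:iterations}. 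In particular $f(B)\subseteq B$, and since $R$ is a bijection of $X^c$ and $\phi$ is a bijection onto $B$, the restriction $f|_B=\phi\circ R\circ\phi^{-1}$ is a bijection of the finite set $B$; consequently every state in $B$ is periodic.

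The second key step is the converse inclusion: every periodic state of $f$ lies in $B$. For this I would invoke Proposition~\ref{prop:iterations}, which gives that $f^{ck}(x)$ is block-constant for all $x$ whenever $k\geq k_0$, so that $f^{ck_0}(X^n)\subseteq B$. If $w$ is a periodic point of period $p$, then $w=f^{jp}(w)$ for every $j$, and choosing $j$ so that $jp\geq ck_0$ yields $w=f^{ck_0}\bigl(f^{jp-ck_0}(w)\bigr)\in f^{ck_0}(X^n)\subseteq B$. Together with the previous paragraph this shows that the periodic set of $f$ is exactly $B$, so $\phi$ is a conjugacy between $R$ on $X^c$ (all of whose points are periodic) and $f$ on its periodic set. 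Since the cycle structure of a finite dynamical system is by definition the cycle structure of its restriction to the periodic set, and conjugate bijections of finite sets have identical orbit decompositions, the cycle structures of $f$ and $R$ coincide.

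The step I expect to require the most care is the bookkeeping in the intertwining identity: one must keep the block indices cyclic so that $W_1$ receives its value from $W_c$ and therefore acquires $y_c$, and one must ensure that the copies of $y_{j-1}$ feeding each coordinate function genuinely collapse to a single $y_{j-1}$, which uses commutativity, associativity, and idempotence of $\ju$ together with the standing assumption that no coordinate function is constant (every vertex has positive in-degree, so each semilattice function really is applied to at least one variable). Once this identity and the characterization of the periodic set are established, the remaining passage to equal cycle structures is purely formal.
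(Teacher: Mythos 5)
Your proposal is correct and follows essentially the same route as the paper: the paper's proof defines the same block-constant embedding $\Gamma$ (your $\phi$) together with a projection $\Phi$, and rests on the equalities $\Phi\circ f\circ\Gamma=R$ and $\Gamma\circ R\circ\Phi=f|_{\{\text{periodic points}\}}$, which encode exactly your intertwining identity and your identification (via Proposition~\ref{prop:iterations}) of the periodic set with the block-constant states. Your write-up merely spells out the details the paper leaves implicit, so no further comparison is needed.
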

\begin{proof}
Let $\Gamma:X^c \rightarrow X^n$ and $\Phi:X^n \rightarrow X^c$ be
defined by
$\Gamma(y_1,y_2,\ldots,y_c)=(\underbrace{y_1,\dots,y_1}_{s_1 \
times}, \underbrace{y_2,\dots,y_2}_{s_2 \ times},\dots,
\underbrace{y_c,\dots,y_c}_{s_c \ times}) $ and $
\Phi(x_1,\dots,x_n)= (x_{s_1},x_{s_1+s_2},\ldots,x_n)$. The proof
now follows from the equalities $\Phi\circ f\circ\Gamma=\tau$ and
$\Gamma\circ\tau\circ\Phi=f|_{\{\textrm{periodic points of $f$}\}}$.
\end{proof}

For any positive integers $p,k$ that divide $c$, let $A(p)$ be the
set of periodic points of period $p$ and let $\ds D(k) :=
\bigcup_{p|k} A(p)$.
\begin{proposition}
 The cardinality of the set $D(k)$ is $|D(k)|=M^k$.
\end{proposition}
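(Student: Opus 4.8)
The plan is to transport everything to the rotation $R$ via Corollary~\ref{cor:rotation} and then recognize $D(k)$ as the set of fixed points of $R^k$, whose cardinality is an elementary count.

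First I would invoke Corollary~\ref{cor:rotation}: the cycle structure of $f$ equals that of $R:X^c\rightarrow X^c$, $R(y_1,\ldots,y_c)=(y_c,y_1,\ldots,y_{c-1})$. Hence the sets $A(p)$ and $D(k)$ can be computed with $R$ in place of $f$. Since $R$ cyclically shifts coordinates, $R^c=\mathrm{id}$, so every point of $X^c$ is periodic with period dividing $c$; in particular the definition of $D(k)$ for $k\mid c$ makes sense for $R$.

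Next I would note that a point $y\in X^c$ belongs to $D(k)=\bigcup_{p\mid k}A(p)$ exactly when its period divides $k$, equivalently when $R^k(y)=y$. Thus $D(k)$ is precisely the fixed-point set of $R^k$, and the problem reduces to counting these fixed points. Indexing coordinates by $\zz/c\zz$ (with representatives $1,\ldots,c$), a direct computation gives $(R^k y)_i=y_{i-k}$, so $R^k$ realizes the shift $i\mapsto i+k$ on the coordinate positions. Therefore $y$ is fixed by $R^k$ if and only if $y$ is constant on each orbit of this shift.

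The one step requiring care is the orbit count. The orbits of $i\mapsto i+k$ on $\zz/c\zz$ are the cosets of the subgroup $\langle k\rangle$, which has order $c/\gcd(k,c)$; since $k\mid c$ we have $\gcd(k,c)=k$, giving exactly $k$ orbits. Assigning an element of $X$ freely on each orbit then yields $|D(k)|=M^k$, as claimed. I expect this orbit count (and the use of the hypothesis $k\mid c$ to pin it down to exactly $k$) to be the only substantive point; the rest is routine bookkeeping.
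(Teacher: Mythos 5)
Your proposal is correct and follows essentially the same route as the paper: the paper's one-line proof also reduces to the rotation $R$ (via Corollary \ref{cor:rotation}) and invokes the fact that a rotation on $c$ coordinates with $M$ colors has exactly $M^k$ points of period dividing $k$ when $k \mid c$. You have merely supplied the details the paper leaves implicit, namely identifying $D(k)$ with the fixed points of $R^k$ and counting the $k$ orbits of the shift $i \mapsto i+k$ on $\zz/c\zz$, which is done correctly.
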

\begin{proof}
It follows from the fact that if $k|c$, then a rotation in $M$
colors and $c$ variables has $M^k$ colorings of periods that divide
$k$.
\end{proof}

\begin{corollary}
If $p$ is a prime number and $p^k$ divides $c$ for some $k \geq 1$, then
\begin{displaymath}
|A(p^k)| = M^{p^k} - M^{p^{k-1}}.
\end{displaymath}
\end{corollary}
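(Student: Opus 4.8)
The plan is to exploit the fact that the divisors of a prime power form a chain, so that the preceding Proposition collapses into a simple telescoping count. First I would recall that every periodic point of $f$ has a well-defined minimal period, and that (by Corollary \ref{cor:rotation}) on the set of periodic points $f$ is conjugate to the rotation $R$, whose minimal periods all divide $c$. Consequently the sets $A(d)$, as $d$ ranges over the divisors of $c$, are pairwise disjoint, and for each $k\mid c$ the union $D(k)=\bigcup_{d\mid k}A(d)$ is in fact a \emph{disjoint} union.

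Next I would use that $p$ is prime: the positive divisors of $p^{k}$ are exactly $p^{0},p^{1},\dots,p^{k}$, while those of $p^{k-1}$ are $p^{0},\dots,p^{k-1}$. Hence the disjoint decompositions give
\[
D(p^{k})=\bigsqcup_{j=0}^{k}A(p^{j})=D(p^{k-1})\;\sqcup\;A(p^{k}).
\]
Since $p^{k}\mid c$ by hypothesis, we also have $p^{k-1}\mid c$, so the preceding Proposition applies to both levels and yields $|D(p^{k})|=M^{p^{k}}$ and $|D(p^{k-1})|=M^{p^{k-1}}$. Taking cardinalities in the decomposition above then gives
\[
|A(p^{k})|=|D(p^{k})|-|D(p^{k-1})|=M^{p^{k}}-M^{p^{k-1}},
\]
as claimed.

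The argument is genuinely short because all the dynamical content has already been absorbed into the reduction to the rotation $R$ and into the count $|D(k)|=M^{k}$. The only step requiring any care is the disjointness of the sets $A(d)$, which rests on the uniqueness of the minimal period of a periodic point; once that is recorded, the prime-power hypothesis turns the general inclusion–exclusion over the divisor lattice into a single telescoping subtraction, so no substantive obstacle remains.
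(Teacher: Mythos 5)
Your proof is correct and matches the paper's own argument: the paper likewise rests on the disjoint decomposition $D(p^k) = D(p^{k-1}) \biguplus A(p^k)$ combined with the preceding proposition giving $|D(k)| = M^k$. Your write-up merely makes explicit the disjointness of the sets $A(d)$ (uniqueness of minimal period) and the application of the proposition at both levels $p^k$ and $p^{k-1}$, which the paper leaves implicit.
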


\begin{proof}
It is clear that $|D(1)| = M$ (there are $M$ constant colorings).
Now if $p$ is prime and $k \geq 1$, then the proof follows from the
fact that $D(p^k) = D(p^{k-1}) \biguplus A(p^k)$, where $\biguplus$
is the disjoint union.
\end{proof}

Next we prove Theorem \ref{am-formula} which gives the exact number of periodic points of
any possible length.

\begin{theorem}\label{am-formula}
Let $f$ be a semilattice network whose dependency graph is
strongly connected and has loop number $c$.  If $c=1$, then
$\cc(f)=MC^1$. If $c>1$ and $k=p_1^{s_1}\ldots p_r^{s_r}$ is a
divisor of $c$, then the number of periodic states of period $k$ is
$|A(k)|$
\begin{equation} |A(k)|= \sum_{i_1 = 0}^1 \cdots \sum_{i_r=0}^{1}
(-1)^{i_1+i_2+\cdots+i_r} M^{p_1^{s_1-i_1}p_2^{s_2-i_2} \dots
p_r^{s_r-i_r}}. \label{Am}
\end{equation}
\end{theorem}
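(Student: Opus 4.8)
The plan is to reduce everything to counting periodic points of the single rotation $R$ and then to extract $|A(k)|$ from the already-computed cardinalities $|D(k)|=M^{k}$ by Möbius inversion. By Corollary \ref{cor:rotation} the cycle structure of $f$ coincides with that of $R:X^c\rightarrow X^c$, so it suffices to count, for each divisor $k$ of $c$, the states $(y_1,\ldots,y_c)\in X^c$ whose period under $R$ is exactly $k$. The case $c=1$ is immediate: $R$ is the identity on $X$, each of the $M$ states is fixed, so $\cc(f)=MC^1$.

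For the case $c>1$ I would first record the two facts that make the inversion possible. Since $R^c=\mathrm{id}$, every period divides $c$, and a state has period dividing $k$ precisely when it is fixed by $R^k$; hence $D(k)$ is exactly the set of states of period dividing $k$, and, because distinct periods give disjoint fibers, $D(k)=\biguplus_{p\mid k}A(p)$. Combining this with the preceding Proposition yields $\sum_{p\mid k}|A(p)|=|D(k)|=M^{k}$ for every divisor $k$ of $c$.

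Next I would apply Möbius inversion over the divisor lattice to the identity $M^{k}=\sum_{p\mid k}|A(p)|$, obtaining
\begin{equation*}
|A(k)|=\sum_{p\mid k}\mu\!\left(\tfrac{k}{p}\right)M^{p}=\sum_{d\mid k}\mu(d)\,M^{k/d}.
\end{equation*}
The final step is purely arithmetic: since $\mu(d)$ vanishes unless $d$ is squarefree, only the $2^{r}$ divisors $d=\prod_{j\in S}p_j$ indexed by subsets $S\subseteq\{1,\ldots,r\}$ survive. For such a $d$ one has $\mu(d)=(-1)^{|S|}$ and $k/d=\prod_{j}p_j^{s_j-[\,j\in S\,]}$; writing $i_j=1$ when $j\in S$ and $i_j=0$ otherwise converts the sum into the $r$-fold alternating sum of equation \eqref{Am}.

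I expect no serious obstacle, since the heavy lifting --- the conjugacy with $R$ and the count $|D(k)|=M^{k}$ --- is already established; the only point requiring care is the reindexing from divisors to subsets, namely verifying that the squarefree divisors of $k$ biject with the tuples $(i_1,\ldots,i_r)\in\{0,1\}^{r}$ in a way that matches the exponents $p_1^{s_1-i_1}\cdots p_r^{s_r-i_r}$ appearing in \eqref{Am}. As a consistency check, specializing to $k=p^{s}$ (so that $r=1$) recovers the previously noted identity $|A(p^{s})|=M^{p^{s}}-M^{p^{s-1}}$.
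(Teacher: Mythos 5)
Your proof is correct and takes essentially the same route as the paper: both reduce to the rotation $R$ via Corollary \ref{cor:rotation}, use the disjoint decomposition $D(k)=\biguplus_{p\mid k}A(p)$ together with $|D(k)|=M^{k}$, and then extract $|A(k)|$ by an inclusion--exclusion argument. Your M\"obius inversion over the divisor lattice is just the closed form of the paper's inclusion--exclusion applied to $A(k)=D(k)\setminus\bigcup_{j=1}^{r}D(k/p_{j})$, so the two arguments coincide in substance.
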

\begin{proof}
The statement for $c=1$ is part of the previous corollary. Now
suppose that $c>1$. For $1 \leq j \leq r$, let $k_j =
p_j^{s_j-1}\prod_{i=1, i\neq j}^r p_i^{s_i}$. Then $D(k) = A(k)
\biguplus (\bigcup_{j=1}^r D(k_j)) $, where $\biguplus$ is a
disjoint union, in particular,
\begin{displaymath}
A(k) = D(k) \setminus \bigcup_{j=1}^r  D(k_j).
\end{displaymath}
The formula \ref{Am} follows from the inclusion-exclusion principle and the disjoint union above.
\end{proof}

\begin{corollary}
If $k$ divides $c$, then the number of cycles of length $k$ in the
phase space of $f$ is $\cc(f)_k = \dfrac{|A(k)|}{k}$. Hence the
cycle structure of $f$ is
\begin{equation*}
\cc(f) = \sum_{k \mid c} \dfrac{|A(k)|}{k} \cc^k.
\end{equation*}
\end{corollary}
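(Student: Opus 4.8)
The plan is to invoke the standard orbit-counting principle for the iterates of a function restricted to its set of periodic points, using the earlier corollaries to pin down exactly which cycle lengths can occur. First I would recall from Corollary \ref{cor:period} that the period of $f$ equals $c$, so that $f^c$ acts as the identity on the set of periodic points of $f$. Consequently every directed cycle in the phase space $\s(f)$ has length dividing $c$, and no other cycle lengths arise. This guarantees that the index set of the final sum is precisely the set of divisors $k \mid c$, matching the stated range.

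Next, fix a divisor $k$ of $c$. I would argue that a directed cycle of length $k$ in $\s(f)$ is exactly an orbit $\{u, f(u), \dots, f^{k-1}(u)\}$ of a periodic point $u$ of minimal period $k$; its $k$ vertices are pairwise distinct, and each is again a point of minimal period $k$, hence each lies in $A(k)$. Conversely, every point of $A(k)$ lies on a unique such cycle. Therefore $A(k)$ decomposes as a disjoint union of the length-$k$ cycles, each contributing exactly $k$ points, which yields $|A(k)| = k \cdot \cc(f)_k$ and hence $\cc(f)_k = |A(k)|/k$. Integrality of this quotient is automatic, since $\zz/k$ acts freely by iteration on the points of minimal period $k$, so no separate divisibility argument is needed.

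Finally, since the attainable cycle lengths range exactly over the divisors of $c$, summing the contribution $\cc(f)_k\,\cc^k$ over all $k \mid c$ assembles the full cycle structure, giving $\cc(f) = \sum_{k \mid c} \frac{|A(k)|}{k}\,\cc^k$, where $\cc(f)_k$ is supplied by the first part and $|A(k)|$ by Theorem \ref{am-formula}.

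I do not anticipate a genuine obstacle here: the entire content is the elementary bijection between length-$k$ cycles and the $\zz/k$-orbits of minimal-period-$k$ points, together with the observation that these orbits are free. The one place that requires care is confirming, via Corollary \ref{cor:period}, that every cycle length must divide $c$, so that the indexing set of the sum is correct and exhaustive.
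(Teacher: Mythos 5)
Your proposal is correct and coincides with the paper's intended argument: the corollary is stated there without proof precisely because it follows, exactly as you argue, from Corollary \ref{cor:period} (so that $f^c$ is the identity on periodic points and every cycle length divides $c$) together with the partition of $A(k)$ into free $\zz/k$-orbits of size $k$, giving $\cc(f)_k = |A(k)|/k$ with $|A(k)|$ supplied by Theorem \ref{am-formula}. Nothing further is needed.
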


Example \ref{fig:example} (Cont.). In our running example we obtain:
\[
\cc(h_1)= 3\cc_1, \ \ \ \ \cc(h_2)= 3\cc_1+3\cc_2, \ \ \ \ \cc(h_3)=
3\cc_1+8\cc_3, \ \ \ \  \cc(h_4)= 3\cc_1
\]

\begin{remark}
Notice that the cycle structure of $f$ depends on the loop number
and $|X|=M$ only. In particular, a semilattice network with
loop number 1 on a strongly connected dependency graph only has as
limit cycles the $M$ fixed points $(w,w,\ldots ,w)$ where $w\in X$,
regardless of how many vertices its dependency graph has and how
large $n$ is.
\end{remark}

\section{Networks with general dependency graph}

Let $f : \s \longrightarrow \s$ be a semilattice network with
dependency graph $\dd(f)$. 
Let $G_1,\dots,G_t$ be the strongly connected components of
$\dd(f)$. 
Furthermore, suppose that none of the $G_i$ is trivial.
 For $1 \leq i \leq t$, let $h_i$ be the semilattice network
that has $G_i$ as its dependency graph and suppose that the loop
number of $h_i$ is $c_i$. In particular, the loop number of $f$ is
$c := \lcm\{c_1,\dots,c_t\}$.

First we study the effect of deleting an edge in the dependency
graph between two strongly connected components. Let $G_1$ and $G_2$
be two strongly connected components in $\dd(f)$ and suppose $G_1
\preceq G_2$. Let $x \longrightarrow y$ be a directed edge in
$\dd(f)$ between a vertex $x$ in $G_1$ and a vertex $y$ in $G_2$.
Let $\dd'$ be the graph $\dd(f)$ after deleting this edge, and let
$g$ be the semilattice network such that $\dd(g) = \dd'$.

\begin{theorem}\label{theorem:f-h}
Any cycle in the phase space of $f$ is a cycle in the phase space of
$g$. In particular  $\cc(f) \leq \cc(g)$ componentwise.
\end{theorem}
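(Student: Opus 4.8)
The plan is to exploit monotonicity. Equip $X$ with the partial order $a\leq b$ defined by $a\ju b=a$; for this order one has $a\ju b\leq a$ for all $a,b$, and $\ju$ is order preserving, so every coordinate function of a semilattice network---and hence $f$ and $g$ themselves---is monotone for the product order on $X^n$. The networks $f$ and $g$ agree in every coordinate except $y$, where $g_y$ is obtained from $f_y$ by dropping the argument coming from $x$; thus $f_y(w)=g_y(w)\ju w_x\leq g_y(w)$ while $f_j=g_j$ for $j\neq y$. Hence $f(w)\leq g(w)$ coordinatewise for every state $w$, and by monotonicity $f^k(w)\leq g^k(w)$ for all $k$. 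Since $x\in G_1$ and $y\in G_2$ lie in distinct strongly connected components, there is no directed path from $y$ to $x$ (otherwise the edge $x\to y$ would force $x\sim y$); so the set $U$ of vertices not reachable from $y$ contains $x$, is closed under taking predecessors, and carries identical dynamics under $f$ and $g$. Therefore $f^k(w)$ and $g^k(w)$ agree on every coordinate in $U$, and in particular $(f^k(w))_x=(g^k(w))_x$ for all $k$.

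Next I would reduce the statement to a single inequality along each cycle. Let $u$ be a periodic point of $f$ of period $p$, and write $u^i=f^i(u)$. Because $f$ and $g$ coincide away from $y$, the directed cycle $u^0\to\cdots\to u^{p-1}\to u^0$ of $\s(f)$ is also a cycle of $\s(g)$ precisely when $(g(u^i))_y=(f(u^i))_y$ for every $i$ (the remaining coordinates of $g(u^i)$ and $f(u^i)$ coincide because $f_j=g_j$ for $j\neq y$), that is, when
\[
g_y(u^i)\leq u^i_x\qquad\text{for all }i,
\]
so that the extra meet with $u^i_x$ in $f_y$ is redundant. Establishing this coordinatewise inequality along the orbit is the crux.

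To prove it I would use the edge $x\to y$ together with the strong connectivity of $G_2$. Since $g_y$ is a meet of its arguments, it suffices to bound a single one of them, and I would pick an in-neighbor $w_0$ of $y$ lying inside $G_2$---such a vertex exists because $G_2$ is non-trivial and strongly connected---and show $u^i_{w_0}\leq u^i_x$ for every $i$. The edge $x\to y$ forces $u^{j+1}_y\leq u^j_x$, so the value supplied by $x$ is recorded at $y$ one step later; choosing a directed path from $y$ to $w_0$ inside $G_2$ and following this recorded value along it gives $u^i_{w_0}\leq u^{i'}_x$, where $i'$ lags $i$ by one plus the length of the path. The decisive point is that on a periodic orbit one can arrange the total number of steps to be a multiple of the period---adjusting the path length by multiples of the loop number of $G_2$ and using Lemma \ref{lem:partition} to track the phase---so that periodicity identifies $u^{i'}_x$ with $u^i_x$. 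Reconciling the loop number of $G_1$ that drives $x$ with the loop number of $G_2$ that carries $y$, i.e.\ making the phases match for every $i$ simultaneously, is the step I expect to be the main obstacle.

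Granting the inequality, each cycle of $f$ is literally a cycle of $g$ of the same length. Distinct cycles of $f$ are vertex-disjoint in $\s(f)$ and remain vertex-disjoint cycles in $\s(g)$, so for each $k$ the number of cycles of length $k$ of $f$ is at most that of $g$; this is exactly $\cc(f)\leq\cc(g)$ componentwise.
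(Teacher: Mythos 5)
Your proposal is correct and, at its core, follows the same route as the paper: the paper also reduces the claim to a single absorption identity on the periodic orbit, namely $\uu_x\ju \uu_{y'}=\uu_{y'}$ for an in-neighbor $y'$ of $y$ inside $G_2$, which in your order-theoretic language is exactly $u^i_{y'}\leq u^i_x$; and it proves it by the same mechanism, namely that $x$ occurs as an argument of the iterated coordinate function $f^{mk}_{y'}$ because there is a walk from $x$ to $y'$ of length a multiple of the period, so that $\uu_{y'}=f^{mk}_{y'}(\uu)=\uu_x\ju\cdots$ and idempotence finishes. The one point worth spelling out is that the step you flag as ``the main obstacle'' is not an obstacle at all, and your construction in fact slightly simplifies the paper's. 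All walks from $y$ to $w_0$ inside $G_2$ have length $\equiv -1 \pmod b$, and every sufficiently large length in that residue class is realized (this is the content of \cite[Cor.~4.4]{CLP}, which the paper invokes: append long closed walks at $w_0$, whose lengths sweep all large multiples of $b$). Hence the total lag $1+\ell$ along $x\to y\to\cdots\to w_0$ equals $qb$ for every sufficiently large $q$; choosing $q$ a multiple of the orbit period $p$ gives $u^{i}_{w_0}\leq u^{i-qb}_x=u^{i}_x$ for all $i$ simultaneously, since the shift $qb\equiv 0\pmod p$ is uniform in $i$. The loop number $a$ of $G_1$ never enters, so no reconciliation of phases between the two components is needed; the paper, by contrast, pads its walk with closed walks at $x$ of length $pa$ and arranges $q(a+b)=km$, using both loop numbers to the same effect. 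Two minor tidying remarks: the monotonicity scaffolding ($f\leq g$, hence $f^k\leq g^k$) and the observation about the set $U$ of vertices unreachable from $y$ are never used in your decisive step and can be deleted; and your ``paths'' must be allowed to repeat vertices (walks), which is what the correspondence between walks of length $k$ and occurrences of variables in $f^k_y$ requires.
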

\begin{proof}
First notice that $x$ appears in the expression $f^k_y$ if and only
if there is a path from $x$ to $y$ of length $k$.
Let $\mathcal{C} := \{\uu,f(\uu),\dots,f^{m-1}(\uu)\}$ be a cycle of
length $m$ in $\s(f)$. To show that $\mathcal{C}$ is a cycle in
$\s(g)$, it is enough to show that, $\uu_x\ju \uu_{G_2}=\ju
\uu_{G_2}$. Thus, the value of $y$ is determined already by the
value of $y'$ and the edge $(x,y)$ does not contribute anything new
and hence $\cc$ is a cycle in $\s(g)$. This is equivalent to show
that $\uu_x\ju \uu_{y'}=\uu_{y'}$ for all $y' \in G_2$ such that
there is an edge from $y'$ to $y$.

Suppose the loop number of $G_1$ (resp. $G_2$) is $a$ (resp. $b$).
Now, any path from $x$ (resp. $y$) to itself is of length $pa$
(resp. $qb$) where $q, p \geq T$ and $T$ is large enough, see
\cite[Corollary 4.4]{CLP}. Thus there is a path from $x$ to $y$ of
length $qa+1$ for any $q \geq T$. Also, there is a directed path
from $y$ to $y'$ of length $qb-1$ for any $q \geq T$. This implies
the existence of a path from $x$ to $y'$ of length $q(a+b)$ for all
$q \geq T$, then $x$ appears in $f^{mk}_{y'}$.

Now $\uu = f^{m}(\uu) = f^{mk}(\uu)$, for all $k \geq 1$. Choose
$q,k$ large enough such that $q(a+b) = km \geq T$. Then,
$\uu_{y'}=f^{mk}_{y'}(\uu)=\uu_x\ju\ldots$ and $\uu_x\ju
\uu_{y'}=\uu_x\ju \uu_x\ju\ldots=\uu_x\ju\ldots=\uu_{y'}$. Therefore
$\uu_x\ju \uu_{G_2}=\ju \uu_{G_2}$.
\end{proof}

Let $h :\s \longrightarrow \s$ be the semilattice network with the
disjoint union of $G_1, \dots, G_t$ as its dependency graph. That
is, $h = (h_1,\dots,h_t)$. For $h$, there are no edges between any
two strongly connected components of the dependency graph of the
network. Its cycle structure can be completely determined from the
cycles structures of the $h_i$ alone.

\begin{theorem}\label{disjoint}
Let $\cc(h_i) = \sum_{j|l_i} a_{i,j} \cc^j$ be the cycle structure
of $h_i$. Then the cycle structure of $h$ is $\cc(h)= \prod_{i=1}^t
\cc(h_i)$ (where $\cc^r \cc^s := \frac{rs}{\lcm(r,s)}
\cc^{\lcm(r,s)}$) and the number of cycles of length $m$ (where
$m|l$) in the phase space of $h$ is
\begin{equation}\label{C_m:disjoint}
\cc(h)_m = \sum_{\substack{j_i | l_i \\ \lcm\{j_1,\dots,j_t\}=m}}
\dfrac{j_1 \cdots j_t}{m} \prod_{i=1}^t a_{i,j_i}.
\end{equation}
\end{theorem}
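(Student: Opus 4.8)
The plan is to reduce everything to the structure of periodic points. Since $h$ acts block-wise, writing $X^n = X^{n_1}\times\cdots\times X^{n_t}$ with $h(u)=(h_1(u^{(1)}),\dots,h_t(u^{(t)}))$, a point $u$ is periodic for $h$ if and only if each block $u^{(i)}$ is periodic for $h_i$; moreover the restriction of $h$ to its periodic points is exactly the direct product of the permutations $h_i$ restricted to their periodic points. Thus the cyclic part of $\s(h)$ is the product of the cyclic parts of the $\s(h_i)$, and it suffices to understand how cycles multiply under direct products of permutations.

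Next I would isolate the single key combinatorial fact. Fix one limit cycle $C_i$ of length $j_i$ in $\s(h_i)$ for each $i$. The product $C_1\times\cdots\times C_t$ is invariant under $h$, so I parametrize each $C_i$ by $\zz/j_i\zz$ in such a way that $h_i$ acts as translation by $1$; then $h$ acts on $\prod_i \zz/j_i\zz$ as the diagonal translation by $(1,\dots,1)$. This is the action of the cyclic group $\langle h\rangle$, and the stabilizer of any point is $\lcm\{j_1,\dots,j_t\}\,\zz$. By orbit--stabilizer, every orbit has length $m:=\lcm\{j_1,\dots,j_t\}$, so the $j_1\cdots j_t$ points of $C_1\times\cdots\times C_t$ split into exactly $\tfrac{j_1\cdots j_t}{m}$ cycles, each of length $m$. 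This single computation is the heart of the proof and is precisely the multiplication rule $\cc^r\cc^s=\tfrac{rs}{\lcm(r,s)}\cc^{\lcm(r,s)}$.

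Finally I would assemble the count. Every periodic point of $h$ lies in exactly one product block $C_1\times\cdots\times C_t$, distinct choices of the $C_i$ give disjoint blocks, and the number of ways to choose cycles of lengths $(j_1,\dots,j_t)$ is $\prod_{i=1}^t a_{i,j_i}$. Summing the contribution $\tfrac{j_1\cdots j_t}{m}\prod_i a_{i,j_i}$ over all length-tuples $(j_1,\dots,j_t)$ with $j_i\mid l_i$ and $\lcm\{j_1,\dots,j_t\}=m$ yields formula \eqref{C_m:disjoint}; the product-of-polynomials statement $\cc(h)=\prod_{i=1}^t \cc(h_i)$ then follows by expanding the product with the stated multiplication rule and collecting the coefficient of $\cc^m$, which reproduces the same sum term by term. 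The step I expect to require the most care is verifying that the cyclic part of $\s(h)$ is genuinely the product of those of the $\s(h_i)$ --- that periodicity, cycle membership, and disjointness all behave multiplicatively --- together with the orbit-length/$\lcm$ computation; both are elementary but must be stated precisely to make the coefficient bookkeeping rigorous.
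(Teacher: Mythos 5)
Your proposal is correct and follows essentially the same route as the paper, whose proof consists of citing the single key fact that a tuple of periodic points of periods $k_1,\dots,k_t$ is a periodic point of period $\lcm(k_1,\dots,k_t)$; your orbit--stabilizer computation and the resulting count of $\tfrac{j_1\cdots j_t}{m}$ cycles of length $m$ per product block is precisely the detailed working-out of that fact and the coefficient bookkeeping the paper leaves implicit.
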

\begin{proof}
This follows from the fact that if $\uu$ is a periodic point of
$h_i$ of period $k_i$ and  $\vv$ is a periodic point of $h_j$ of
period $k_j$, then $(\uu,\vv)$ is a periodic point of $(h_i,h_j)$ of
period $\lcm(k_i,k_j)$.
\end{proof}

\begin{corollary}\label{upperbound}
Let $f$ and $h$ be as above. The number of cycles of any length in
the phase space of $f$ is less than or equal to the number of cycles
of that length in the phase space of $h$. That is $\cc(f) \leq
\cc(h)$ componentwise. In particular, the period of $f$ is a divisor
of the loop number of its dependency graph.
\end{corollary}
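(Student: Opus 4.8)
\section*{Proof proposal for Corollary~\ref{upperbound}}

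The plan is to deduce the corollary from Theorem~\ref{theorem:f-h} by a telescoping argument that peels off the inter-component edges of $\dd(f)$ one at a time. Let $e_1,\dots,e_N$ be an enumeration of all edges of $\dd(f)$ that join two distinct strongly connected components, and set $g_0 := f$. Having defined $g_{j-1}$, let $g_j$ be the semilattice network (with the same operator $\ju$) whose dependency graph is obtained from $\dd(g_{j-1})$ by deleting $e_j$. After all $N$ deletions no edge joins distinct components, so $\dd(g_N)$ is the disjoint union of $G_1,\dots,G_t$ and $g_N = h$.

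First I would check that Theorem~\ref{theorem:f-h} applies at each step. The point is that the strongly connected components of $\dd(g_j)$ are exactly $G_1,\dots,G_t$ for every $j$: an edge joining two distinct components lies on no directed cycle (a return path would make the two components mutually reachable, hence equal), and deleting edges that lie on no cycle leaves the mutual-reachability relation — and therefore the component decomposition — unchanged. Consequently each $e_j$ is, in $\dd(g_{j-1})$, an edge running from some component $G_a$ to a distinct component $G_b$; since $e_j$ itself witnesses $G_a \preceq G_b$, the hypotheses of Theorem~\ref{theorem:f-h} are met, giving $\cc(g_{j-1}) \leq \cc(g_j)$ componentwise. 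Chaining these inequalities yields $\cc(f) = \cc(g_0) \leq \cc(g_N) = \cc(h)$ componentwise.

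For the final assertion about the period I would read off cycle lengths from this inequality. If $f$ has a limit cycle of length $m$, then $\cc(f)_m > 0$, hence $\cc(h)_m > 0$, so $h$ has a cycle of length $m$ as well. By Theorem~\ref{disjoint} every cycle length of $h$ has the form $\lcm\{j_1,\dots,j_t\}$ with each $j_i$ a cycle length of $h_i$, and by Corollary~\ref{cor:period} each $h_i$ has period $c_i$, so $j_i \mid c_i$. Therefore $m \mid \lcm\{c_1,\dots,c_t\} = c$. Since every limit-cycle length of $f$ divides $c$, the period of $f$ — the least common multiple of its limit-cycle lengths — divides the loop number $c$ of $\dd(f)$.

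I expect the only genuine obstacle to be the bookkeeping in the second paragraph: making precise that the component decomposition is invariant under successive deletions of inter-component edges, so that Theorem~\ref{theorem:f-h} can be invoked repeatedly rather than only once. Everything else is a transitive chaining of componentwise inequalities together with the already-established structure results for the disjoint and strongly connected cases.
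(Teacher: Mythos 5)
Your proposal is correct and is exactly the argument the paper intends (the corollary is stated without explicit proof, as an immediate consequence of iterating Theorem~\ref{theorem:f-h} over all inter-component edges until $h$ is reached). Your careful verification that the strongly connected components---and hence the hypotheses of Theorem~\ref{theorem:f-h} and the standing positive in-degree assumption---are preserved at each deletion step, plus the derivation of the period statement from Theorem~\ref{disjoint} and Corollary~\ref{cor:period}, fills in precisely the bookkeeping the paper leaves implicit.
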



In \cite{CBN}, it was shown that the poset structure of
$\mathcal{P}$ gives an algebraic way to combine the cycle structure
of $h_i$ to obtain lower and upper bounds for the cycle structure of
$f$, where $f$ was a conjunctive Boolean network. It is not
difficult to see that the proofs of these results still also hold
for general semilattice networks if the corresponding semilattice
operator has a ``neutral'' and an ``absorbent'' element (analogous
to the identities $1\wedge x=x$ and $0\wedge x=0$). In order to
state the theorem about lower and upper bounds of semilattice
networks, we need the following definitions.

\begin{definition}
Let $\ju:X^2\rightarrow X$ be a semilattice operator. An element
$\la\in X$ is called a \emph{neutral element} if $\la\ju x=\ju x$
for all $x\in X$. An element $\te\in X$ is called an \emph{absorbent
element} if $x\ju \te=\ju \te$ for all $x\in X$.
\end{definition}

\begin{example}
All the functions in Example \ref{example:junctive} have a neutral
and absorbent element; they are:
\begin{eqnarray*}
  \la = 1& , & \te=0 \\
  \la = 0& , & \te=1  \\
  \la = m& , & \te=0 \\
  \la = 0& , & \te=m\\
\end{eqnarray*}
\end{example}

\begin{remark}
Since $X$ is finite, every semilattice operator has an absorbent
element ($\te=\ju_{x\in X}x$).
\end{remark}

\begin{remark}
Any semilattice operator $\ju$ on $X$ can be extended to a set with a
neutral element by defining $\ju':(X\cup \{\la\})^2\rightarrow X\cup
\{\la\}$ as $x \ju' y=x\ju y$ if $x,y\in X$ and $x \ju' \la =\la
\ju' x=x$ otherwise.
\end{remark}

Let $f$, $h$, $G_1,\dots,G_t$ be as above and let $c_i$ be the loop
number of $G_i$. Furthermore, let $\mathcal{P}$ be the poset of the
strongly connected components. Let $\Omega$ be the set of all
maximal antichains in $\mathcal{P}$. For $J \subseteq [t] =
\{1,\ldots ,t\}$, let $x_J := \prod_{j \in J} x_j$ and let
$\overline{J} := [t]\setminus J$. In the remainder of the paper we
assume that $f:X^n\rightarrow X^n$ is a semilattice network such that
its semilattice operator, $\ju$, has idempotent neutral and absorbent
elements, $\la$ and $\te$, resp.

\begin{definition}
For any subset $J \subseteq [t]$, let
\[
J^{\preceq} := \{k \, : \,  G_j \preceq G_k \mbox{ for some } j \in J\} , \,
J^{\succeq} := \{k \, : \,  G_j \succeq G_k \mbox{ for some } j \in J\},
\]
\[
J^{\prec} := \{k \, : \,G_j \prec G_k \mbox{ for some } j \in J\} \mbox{, and }
J^{\succ} := \{k \, : \,G_j \succ G_k \mbox{ for some } j \in J\}.
\]
A limit cycle $\cc$ in the phase space of $f$ is $J_\te$ (resp.
$J_\la$) if the $G_j$ component of $\cc$ is $\te$ (resp. $\la$) for
all $j \in J$.
\end{definition}

Denote $\langle K,L\rangle=
\begin{cases}
0, & \text{if $K \cap L \neq \emptyset$},\\
1, & \text{if $K \cap L = \emptyset $}.
\end{cases}$ and $I_N=I^{\succeq}\cup N,J^M=J^{\preceq}\cup M $.

\begin{definition}
The $L$- and $U$-polynomial associated to $\mathcal{P}$ are defined
as follows:
\begin{eqnarray*}
  \mathcal{L}(z_1,\dots,z_t) &=& \sum_{\jj \subseteq
\Omega} (-1)^{|\jj|+1} \prod_{k\in \bigcap_{J \in \jj} J} z_k \\
  \mathcal{U}(z_1,\dots,z_t) &=& \sum_{\substack{I\subseteq N\subseteq [t] \\
J\subseteq M\subseteq [t]}} {(-1)^{|N|+|M|+|I|+|J|}
 {\langle I_N,J^M\rangle \prod_{k\in
\overline{I_N\cup J^M}}{z_k} }}
\end{eqnarray*}
\end{definition}

Example \ref{fig:example} (Cont.). For the poset in Figure
\ref{fig:examplecdg} we obtain:
\begin{align*}
\mathcal{L}(z_1,z_2,z_3,z_4) &= -2+z_1+z_2z_3+z_4\\
\mathcal{U}(z_1,z_2,z_3,z_4) &= 14-7z_1+3z_1z_2-4z_2+3z_1z_3-z_1z_2
z_3+z_2z_3-4z_3+4z_1z_4-2z_1z_2z_4\\
& +3z_2z_4-2z_1z_3z_4+z_1 z_2 z_3 z_4-z_2 z_3 z_4+3 z_3 z_4-7 z_4
\end{align*}

\begin{theorem}\label{thm:main}
With the notation above we have the following coefficient-wise
inequalities
\[
\mathcal{L}(\cc(h_1),\dots,\cc(h_t)) \leq \cc(f) \leq
\mathcal{U}(\cc(h_1),\dots,\cc(h_t)). \label{ineq}
\]
Here, the polynomials $\mathcal{L}$ and $\mathcal{U}$ are evaluated
using the ``multiplication" described in Theorem \ref{disjoint} and
coefficient-wise addition.
\end{theorem}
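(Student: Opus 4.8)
The plan is to establish the two inequalities separately, building on Theorem~\ref{theorem:f-h}, which already tells us that deleting an edge between strongly connected components can only increase the cycle structure, and on Theorem~\ref{disjoint}, which computes the cycle structure of the fully disconnected network $h$. The key conceptual tool is that a limit cycle of $f$ is constrained by the poset $\mathcal{P}$: because the semilattice operator has a neutral element $\la$ and an absorbent element $\te$, information flows along the edges of $\dd(f)$ in a way that forces certain components of a limit cycle to take the value $\te$ (when enough $\te$'s flow in from below) or to remain $\la$ (when a component is a source receiving only $\la$). First I would make precise the notion of a $J_\te$ and $J_\la$ cycle and prove the two basic propagation lemmas: if $G_j$ has value $\te$ on a limit cycle then so does every $G_k$ with $k \in J^{\succeq}$ (absorbency pushes $\te$ upward through the poset), and dually if $G_j$ has value $\la$ then so does every $G_k$ with $k \in J^{\preceq}$ with all of its inputs restricted to $J^{\preceq}$ (neutrality lets $\la$ persist downward among sources). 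These are the structural constraints that the polynomials $\mathcal{L}$ and $\mathcal{U}$ are designed to count.

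For the upper bound, the strategy is inclusion–exclusion over which components are forced to $\te$ and which to $\la$. I would count cycles of $h$ (the product cycle structure) and then subtract those configurations of component-cycles that cannot actually be realized as cycles of $f$ because the poset-propagation would force a value different from the one chosen independently in $h$. Concretely, for each choice of a down-set/up-set pair $(I \subseteq N, J \subseteq M)$ one isolates the set of components $\overline{I_N \cup J^M}$ that are free to vary, contributing $\prod_{k \in \overline{I_N \cup J^M}} z_k$, with the bracket $\langle I_N, J^M\rangle$ killing the inconsistent terms where a component is simultaneously forced to $\te$ and to $\la$. The signs $(-1)^{|N|+|M|+|I|+|J|}$ come from a double inclusion–exclusion (one layer for the $\te$-propagation, one for the $\la$-propagation), and I would verify that after cancellation the only cycles counted with net positive multiplicity are exactly those realizable in $f$, giving $\cc(f) \leq \mathcal{U}(\cc(h_1),\dots,\cc(h_t))$.

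For the lower bound, the approach is to exhibit a collection of limit cycles of $f$ that is guaranteed to exist, indexed by the maximal antichains $\Omega$. The idea is that for each maximal antichain $J \in \Omega$ one can build a limit cycle of $f$ by running genuine (nonconstant) cycles on the components in $\bigcap_{J \in \jj} J$ and fixing everything else at $\te$; because $J$ is a maximal antichain these free components do not feed into one another, so their cycles combine independently exactly as in $h$, contributing $\prod_{k \in \bigcap J} z_k$. The alternating sum $\sum_{\jj \subseteq \Omega}(-1)^{|\jj|+1}$ is again inclusion–exclusion, now over intersections of antichains, correcting for the overcounting of cycles that are free on several antichains at once. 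I would check that every term produced this way is a legitimate $f$-cycle, so the count is a genuine lower bound, yielding $\mathcal{L}(\cc(h_1),\dots,\cc(h_t)) \leq \cc(f)$.

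The hard part will be the bookkeeping in the upper bound: one must show that the doubly-indexed inclusion–exclusion in $\mathcal{U}$ assigns net multiplicity exactly $1$ to each realizable cycle of $f$ and net multiplicity $0$ to each unrealizable product-cycle of $h$, and this requires a careful sign-cancellation argument organized around the forced $\te$-set and forced $\la$-set of a given cycle. The bracket $\langle I_N, J^M \rangle$ and the closure operators $I_N = I^{\succeq}\cup N$, $J^M = J^{\preceq}\cup M$ must be shown to interact so that, summing over all $(I,N,J,M)$ compatible with a fixed cycle, the alternating signs collapse to the correct indicator; this combinatorial identity is the technical heart of the theorem, and I would isolate it as a separate lemma before assembling the final inequality. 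Since the paper states that the conjunctive case of all of this was proven in \cite{CBN} and that the proofs carry over verbatim once a neutral and absorbent element are present, I would structure the argument to mirror that source, pointing out at each step that the only properties of AND/OR actually used are idempotency together with the identities $\la \ju x = \ju x$ and $x \ju \te = \ju \te$.
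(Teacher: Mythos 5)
Your overall route is the same as the paper's: the paper's entire proof is a one-line deferral to Theorems 6.2 and 7.4 of \cite{CBN}, and your plan---propagation lemmas driven by the neutral and absorbent elements, an alternating sum over maximal antichains for $\mathcal{L}$, a double inclusion--exclusion over forced-$\te$ and forced-$\la$ sets for $\mathcal{U}$, with the remark that only idempotency and the identities $\la\ju x=x$ and $x\ju\te=\te$ are ever used---is exactly a reconstruction of that argument. However, one step fails as written. In your lower-bound construction you propose, for a maximal antichain, ``running genuine cycles on the antichain components and fixing everything else at $\te$.'' Recall that $G_i\preceq G_j$ means there is an edge from $G_i$ into $G_j$, so information flows from $\preceq$-smaller to $\preceq$-larger components. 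If the components strictly below the antichain (the set $J^{\succ}$ in the paper's notation) are held at $\te$, then absorbency pushes $\te$ \emph{upward} into the antichain components---this is precisely the first propagation lemma you yourself state---so each antichain component is forced to the constant $\te$ state and cannot carry an arbitrary cycle of $h_k$. The configurations you build therefore do not realize $\prod_{k\in J}\cc(h_k)$. The correct assignment is the opposite one on the down-set: hold $J^{\succ}$ at the neutral constant $\la$, so that its input into the antichain is invisible ($\la\ju x=x$) and the antichain components evolve exactly as under $h$; the components frozen at $\te$ are those strictly \emph{above}, in $J^{\prec}$, where the all-$\te$ state is self-sustaining because every node of a nontrivial strongly connected component has an in-neighbor inside its own component and $x\ju\te=\te$. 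Maximality of the antichain is what makes the assignment total ($[t]=J\cup J^{\prec}\cup J^{\succ}$), and the standing hypothesis that $\la$ is idempotent is what makes the all-$\la$ state on the down-set a genuine fixed state.

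A smaller slip in the same part: you restrict the antichain components to ``genuine (nonconstant) cycles,'' but the monomial $\prod_{k\in\bigcap_{J\in\jj}J}z_k$ evaluated at $z_k=\cc(h_k)$ counts \emph{all} cycles of those components, constants included; the overlaps between the cycle families produced by different antichains (which consist exactly of cycles whose ``live'' part sits in the intersection of the antichains, everything else at the appropriate constants) are what the alternating sum $\sum_{\jj\subseteq\Omega}(-1)^{|\jj|+1}$ over families of antichains corrects for, so no nonconstancy restriction should be imposed. Once the roles of $\la$ and $\te$ on the down-set and up-set are swapped as above, the remainder of your outline---including your identification of the sign-cancellation identity for $\langle I_N,J^M\rangle$ as the technical heart of the upper bound---tracks the proofs of Theorems 6.2 and 7.4 of \cite{CBN} that the paper invokes.
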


\begin{proof}
The proof is analogous to the proofs of Theorems 6.2 and 7.4 in \cite{CBN}.
\end{proof}

Note that the left and right sides of the inequalities (\ref{ineq})
are polynomial functions in the variables $\cc(h_i)$, with integer
coefficients. That is, the lower and upper bounds are polynomial
functions depending exclusively on measures of the network topology.

Example \ref{fig:example} (Cont.). In our running example we obtain:
\begin{align*}
\mathcal{L}(\cc(h_1),\cc(h_2),\cc(h_3),\cc(h_4)) &= 13\cc_1+9\cc_2+24\cc_3+24\cc_6 \\
\mathcal{U}(\cc(h_1),\cc(h_2),\cc(h_3),\cc(h_4)) &=
20\cc_1+24\cc_2+64\cc_3+96\cc_6
\end{align*}

and therefore $13\cc_1+9\cc_2+24\cc_3+24\cc_6 \leq \cc(f) \leq
20\cc_1+24\cc_2+64\cc_3+96\cc_6$. It is important to mention that
the phase space of $f$ has $3^{63}\approx 10^{30}$ nodes so it is
not feasible to obtain information about the cycle structure from
exhaustive enumeration. Also, although the bounds agree on fixed
points for Boolean semilattice networks \cite{CBN}, our example
shows that they do not agree for general lattice networks.

\section{Characterization of Semilattice Networks}

In this section we characterize semilattice networks; in order to do
this, it is enough to characterize semilattice operators. Since
semilattice operators are semilattice operations, the number of
semilattice operators over a set with $m$ elements (up to permutation)
is the number of semilattices with $m$ elements. According to the
next proposition, the number of semilattice operators over a set with
$m$  elements is the number of lattices of size $m+1$. Although
there is no closed formula for the number of lattices of a given size, algorithms
for counting them have been developed \cite{CFL}.

\begin{proposition}
There is a one-to-one correspondence between semilattices with $m$
elements and lattices with $m+1$ elements.
\end{proposition}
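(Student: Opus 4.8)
The plan is to establish the bijection by exploiting the standard correspondence between semilattices and lattices via adjunction of a top (or bottom) element. A meet-semilattice is a partially ordered set in which every pair of elements has a greatest lower bound; equivalently, it is a set equipped with a commutative, associative, idempotent binary operation $\ju$, exactly the axioms of Definition \ref{def:junc}. A lattice is a poset in which every pair has both a meet and a join. The key structural observation is that a finite semilattice is ``almost'' a lattice: it already has all meets, and the only obstruction to being a lattice is the possible absence of joins, which in turn is controlled by whether the poset has a top element.

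First I would set up the correspondence explicitly. Given a semilattice $(X,\ju)$ with $m$ elements, view $\ju$ as the meet operation, giving $X$ the structure of a meet-semilattice poset. Since $X$ is finite, it automatically has a bottom element (namely $\ju_{x \in X} x$, the absorbent element noted in the Remark), so every nonempty subset has a meet. To build a lattice, I would freely adjoin a new top element $\top$ to obtain $\widehat{X} := X \cup \{\top\}$, declaring $x \leq \top$ for all $x \in X$. The claim is that $\widehat{X}$ is a lattice with $m+1$ elements: meets in $\widehat{X}$ agree with those in $X$ (with $\top$ acting as the neutral element for meet), and joins now exist because any two elements $x, y$ have an upper bound, so their join is the meet of the (nonempty, since it contains $\top$) set of common upper bounds. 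This last step uses the general fact that a finite poset with a top element in which all meets exist is automatically a lattice.

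For the reverse direction I would start with a lattice $L$ of size $m+1$. A finite lattice has a greatest element $\top_L$; I would delete it, setting $X := L \setminus \{\top_L\}$, and restrict the meet operation of $L$ to $X$. The point to verify is that $X$ is closed under meet: if $x, y \in X$ then $x \wedge_L y \leq x < \top_L$, so the meet lands in $X$, and $(X, \wedge_L)$ is a semilattice with $m$ elements. Finally I would check that the two constructions are mutually inverse — adjoining a top to $L \setminus \{\top_L\}$ recovers $L$, and deleting the adjoined top from $\widehat{X}$ recovers $X$ — and that they respect isomorphism, so the bijection descends to isomorphism classes and the counting statement follows.

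The main subtlety, and the step I would be most careful about, is confirming that adjoining a single top element to an arbitrary finite meet-semilattice genuinely produces a lattice, i.e.\ that all joins exist in $\widehat{X}$; the deletion direction is comparatively routine since restricting meet and removing the unique maximum is clearly well-defined. One must also fix a convention (meet-semilattice versus join-semilattice, top versus bottom), since a semilattice operator could equally be read as a join, in which case one adjoins a bottom element and deletes the least element of the lattice; by the commutative/associative/idempotent symmetry of Definition \ref{def:junc} these two readings give the same count, so either convention yields the stated correspondence between semilattices of size $m$ and lattices of size $m+1$.
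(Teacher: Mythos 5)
Your proof is correct and takes essentially the same route as the paper: adjoin a new neutral (top) element to the semilattice, define the join of $x,y$ as the $\ju$ of their nonempty set of common upper bounds (the paper's $x \vee y=\ju \{z:z\ju x=x,\ z\ju y=y\}$), and conversely delete the extremal element of a finite lattice of size $m+1$ and restrict $\ju$, checking the constructions are mutually inverse up to isomorphism. If anything, your deletion step is stated more carefully than the paper's: the paper writes $\la=\Ju Z$, which is the \emph{absorbent} element, whereas closure of $Z\setminus\{\la\}$ under $\ju$ requires deleting the \emph{neutral} element (the one with $\la\ju x=x$ for all $x$), which is exactly the greatest element you remove via your observation that $x\ju y\leq x<\top_L$.
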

\begin{proof}
If $(X,\ju)$ is a semilattice with $m$ elements, consider $(X\cup
\{\la\},\ju',\vee)$, by defining $x \ju' y=x\ju y$ if $x,y\in X$ and
$x\ju' \la =\la \ju' x=x$ otherwise; also, $x \vee y=\ju \{z:z\ju
x=x,z\ju y=y\}$. It follows that $(X\cup \{\la\},\ju',\vee)$ is a
lattice with $m+1$ elements. On the other hand, if $(Z,\ju,\vee)$ is
a lattice with $m+1$ elements, let $\la=\Ju Z$; then it follows that
$(Z\backslash \{\la\},\ju|_{Z\backslash \{\la\}^2})$ is a
semilattice with $m$ elements.
\end{proof}

\section{Infinite Semilattice Networks}
We present some results on infinite semilattice networks, both
networks on an infinite set $X$ and networks on an infinite
Cartesian product of a set $X$.

\subsection{Semilattice networks on infinite sets}
If $X$ is a set with infinitely many elements (that is, $M=\infty$),
some of the theorems remain valid. Notice that if $S\subseteq X$ is
finite, there exists a finite set $Z\supseteq S$ that is closed
under $\ju$; that is, $\ju|_Z:Z^2\rightarrow Z$ is a semilattice
function.

Suppose that $\dd(f)$ is strongly connected with loop number $c$,
and let $\uu=(u_1,\ldots,u_n)$ be a periodic point of $f$. Let
$Z\supseteq \{u_1,\ldots,u_n\}$ be a finite subset of $X$ that it is closed under $\ju$;
then we can consider $\uu$ as a periodic point of $f|_{Z^n}$.
Therefore, the results on finite semilattice networks apply, and the period of $\uu$ must divide
$c$. That is, Corollary \ref{cor:period} is valid for $M=\infty$.
Now, consider a divisor $k$ of $c$ and consider $Z\subseteq X$ finite with at least
$m$ elements such that $\ju|_{Z^2}$ is a semilattice operator. Then,
the number of periodic states of period $k$ of $f|_{Z^n}$ is at
least $|A(k)|$ (see Theorem \ref{am-formula} and notice that
$|A(k)|$ is increasing with respect to $M$). Since
$\lim_{m\rightarrow \infty}|A(k)|=\infty$, it follows that $f$ has
infinitely many periodic points and limit cycles of length $k$.
Then Theorem \ref{am-formula} (and the corresponding corollary)
holds for $M=\infty$.

If $\dd(f)$ is not necessarily strongly connected, suppose $\ju$ has
a neutral and an absorbent element. Let $h_1,\ldots,h_n$ correspond to
$f|_{Z^n}$Then, by using the argument in
the paragraph above, it follows that if at least a limit cycle of
length $k$ appears in $\mathcal{L}(\cc(h_1),\ldots,\cc(h_t))$, then $f$ has infinitely
many limit cycles of length $k$.

\subsection{Infinite-dimensional semilattice networks}
If the dimension of $f$ is infinite, that is, $n=\infty$, then $\ju$
needs to satisfy an additional condition to be properly defined: Every
(possibly infinite) subset $S\subseteq X$ has a largest lower bound.
This means that there exists $x\in X$ such that $x\ju s=x$ for all $s\in
S$ (in lattice terminology: $x$ is a lower bound); and if there is
another such $x'$, then $x\ju x'=x'$ (in lattice terminology: $x$ is
the largest lower bound). This additional condition allows for a
function $f_i$ to have infinitely many inputs.

Suppose that $\dd(f)$ is strongly connected with loop number $c$,
and let $\uu=(u_1,\ldots)$ be a periodic point of $f$ of period $d$.
Consider $x,y\in W_i$ (see Lemma \ref{lem:partition}); then there
exists $k_0$ such that for all $k\geq k_0$ there is a path of length
$ck$ from $x$ to $y$ and from $y$ to $x$. Then,
$f^{ck}_y=x\ju\ldots$, $f^{ck}_x=y\ju\ldots$ for all $k\geq k_0$; in
particular, $u_y=f^{ck_0d}_y(\uu)=u_x\ju w$ and
$u_x=f^{ck_0d}_x(\uu)=u_y\ju v$ for some $v,w\in X$. Then, $u_x\ju
u_y=u_x\ju u_x\ju w=u_x\ju w=u_y$, similarly $u_x\ju u_y=u_x$;
therefore $u_x=u_y$. It follows that Corollary \ref{cor:period} and
Theorem \ref{am-formula} remain valid for $n=\infty$ (and $M\in
\mathbb{Z}^+\cup \{\infty\}$).

If $\dd(f)$ is not necessarily strongly connected, suppose $\ju$ has
a neutral and an absorbent element. It is not difficult to show that
Theorem \ref{thm:main} remains valid for $n=\infty$ (and $M\in
\mathbb{Z}^+\cup \{\infty\}$).

\bigskip\noindent
Finally, we show with counterexamples that we cannot omit any
of the properties of $\ju$ in Definition \ref{def:junc}. That is, the
formulas and bounds derived in this paper are valid exactly for
semilattice networks.

\begin{example}\label{eg:linear}
Consider $\ju=+$, then $\ju$ is commutative and associative, but not
idempotent. Consider $f:\mathbb{F}_2^2\rightarrow \mathbb{F}_2^2$,
defined by $f(x_1,x_2)=(x_1+x_2,x_1+x_2)$. It is not difficult to
see that $f$ has a unique limit cycle (a fixed point),
$(x_1,x_2)=(0,0)$; that is, $\cc(f)=\cc^1$. On the other hand, the
loop number of its dependency graph is $1$, so from Theorem
\ref{am-formula} we would obtain $2\cc^1\neq \cc(f)$.
\end{example}

\begin{example}
Consider $x \ju y=x^2y$ defined over $\mathbb{F}_3$. It is easy to
show that $\ju$ associative and idempotent, but not commutative.
Consider $f:\mathbb{F}_3^2\rightarrow \mathbb{F}_3^2$, defined by
$f(x_1,x_2)=(x_1^2 x_2,x_1)$. It is not difficult to show that $f$
has the limit cycle of length 2, $\{(1,2),(2,1)\}$ (it also has 3
fixed points). On the other hand, the loop number of its dependency
graph is $1$, so from Theorem \ref{am-formula} we would obtain
$3\cc^1\neq \cc(f)$.
\end{example}

\begin{example}
Consider $x \ju y=2x+2y$ defined over $\mathbb{F}_3$. It is easy to
show that $\ju$ is commutative and idempotent, but not associative.
Consider $f:\mathbb{F}_3^2\rightarrow \mathbb{F}_3^2$, defined by
$f(x_1,x_2)=(2x_1+2x_2,x_1)$. It is not difficult to show that $f$
has the limit cycle of length 3, $\{(0,1),(2,0),(1,2)\}$ (it also
has 3 fixed points and another limit cycle of length 3). On the
other hand, the loop number of its dependency graph is $1$, so from
Theorem \ref{am-formula} we would obtain $3\cc^1\neq \cc(f)$.
\end{example}

\section{Discussion}
In this paper we have identified a broad class of discrete dynamical
systems with a finite phase space for which one can derive
strong results about their long-term dynamics in terms of
properties of their dependency graphs. We classify completely
the limit cycles of semilattice networks with strongly connected
dependency graph and provide polynomial upper and lower bounds
in the general case. It is our hope that the formulas in this paper
are related to general properties of semilattices, which is a subject
for future investigation. As mentioned in the Introduction, the motivation
for this investigation was the need for theoretical tools to analyze
discrete models in biology. An example of such an application is
given in \cite{ABM}, where it is shown that the results about conjunctive
Boolean networks can be applied to determining the limiting behavior of
certain types of epidemiological models.

The results in this paper apply to certain types of Boolean networks and
cellular automata, which in many cases have the property that the
update functions are of the same type for all nodes. Another model type
to which the results in this paper apply in some cases is that of
so-called logical models, developed by Ren\'e Thomas for the purpose
of modeling gene regulatory networks. It is shown in \cite{VJL} that
logical models can be translated into the framework of polynomial dynamical
systems. If the dynamical system arises from a semilattice function, then
the results of this paper give information about the steady states
and limit cycles of the model under synchronous update.

\end{document}